\documentclass[11pt]{article}
\usepackage[a4paper, total={17cm,25cm}]{geometry}

\usepackage{amsmath,amssymb,amsfonts}
\usepackage{color, graphics}
\usepackage{tikz,tkz-euclide}
\usetikzlibrary{arrows,arrows.meta,patterns,calc,math,bending,cd}
\usepackage{float}

\usepackage[most]{tcolorbox}

\usepackage{pst-all}
\usepackage{pst-solides3d}

\usepackage[new]{old-arrows}

\usepackage{mathrsfs}

\usepackage[alphabetic,y2k,initials]{amsrefs}

\usepackage[normalem]{ulem}
\usepackage{hyperref}
\usepackage{titlesec}

\numberwithin{equation}{section}

\newtheorem{theorem}{Theorem}[section]

\newtheorem{remark}[theorem]{Remark}

\newenvironment{proof}[1][Proof]{\noindent\textit{#1.} }{\hfill$\Box$
	
\medskip}

 \title{The Weighted Walks in Quadrant with Finite Groups: an Algebro-Geometric Approach}

\usepackage{authblk}
\author[1,3]{Vladimir Dragovi\'c}
\author[2,3]{Milena Radnovi\'c}
\affil[1]{\textsc{The University of Texas at Dallas, Department of Mathematical Sciences}}
\affil[2]{\textsc{The University of Sydney, School of Mathematics and Statistics}}
\affil[3]{\textsc{Mathematical Institute SANU, Belgrade}}
\affil[ ]{\texttt{vladimir.dragovic@utdallas.edu, milena.radnovic@sydney.edu.au}}

\date{}

\begin{document}

\maketitle

\begin{abstract}
We classify weighted small-step lattice walks in the quadrant whose associated birational group is finite. Using an algebro-geometric description of the kernel curves and Cayley-type finite-order conditions, we relate the group $G_W$ of a walk to the family of groups $\Gamma_t$ acting on its kernel curves. Together with the uniform upper bound on the order of $G_W$, this allows us to analyse all possible finite orders. We obtain explicit necessary and sufficient conditions for the group to have order $4$, $6$, $8$, or $10$, and prove that no weighted walk in the quadrant has a group of order $12$. This yields a complete classification of weighted quadrant walks with finite groups.
\end{abstract}

\medskip
\noindent\textbf{Keywords.} Weighted lattice walks; walks in the quadrant; kernel curves; finite groups; birational involutions; elliptic curves.

\medskip
\noindent\textbf{2020 Mathematics Subject Classification.} 05A15; 14H52; 14H70; 60G50; 20F55; 39B32.


\section{Introduction}
Planar lattice walks in the quarter plane have attracted considerable interest over the past twenty years; see, for example, \cite{BMM, KuRas, Ras, BKR}. However, their study has a much longer history; see, for example, \cite{Mal1}. Such walks have numerous applications in combinatorics and discrete mathematics, probability, statistics, and mathematical physics; see \cite{HS} and the references therein. One of the central questions concerns the analytic nature of the associated generating functions; see, for example, \cite{Mal, RandomWalks, FayRas1, HS} and the references therein. This question has led, in particular, to the study of the groups associated with lattice-walk models and the conditions under which these groups are finite.

Very recently, in \cite{DRrw}, the authors obtained necessary and sufficient conditions for the groups associated with the kernel curves of a weighted walk to have a prescribed finite order $2n$. The goal of the present paper is to extend the methods of \cite{DRrw} and give a complete description of all weighted walks in the quadrant for which the group $G_W$ is finite.

Our main results are logically independent of previous work, apart from an important uniform bound established in \cite{HS}, which states that the order of a finite group $G_W$ is at most $12$. They are consistent with the results formulated in \cite{KaYa} and with the planar part of \cite{EHR}. The proofs given here are self-contained. 

The paper is organised as follows. In the next section, we introduce the group of a weighted walk, recall the kernel-curve criterion from \cite{DRrw}, and establish the relation between the finiteness of $G_W$ and that of the groups acting on the kernel curves. The subsequent sections treat, in turn, the possible group orders $4$, $6$, $8$, $10$, and $12$.

\section{Kernel curves and finite-group criteria}\label{sec:preliminaries}

We consider weighted walks in the quadrant under the small-step assumption.
The inventory of such a walk $W$ is:
$$
\chi_W(x,y)=\sum_{i,j=-1}^1 p_{ij}x^iy^j,
$$
with
$$
p_{ij}\ge0,
\quad
\sum_{i,j=-1}^1p_{ij}=1.
$$
We can rewrite:
$$
\chi_W(x,y)
=
xA_1(y)+B_1(y)+\frac{C_1(y)}x
=
yA_2(x)+B_2(x)+\frac{C_2(x)}y.
$$
Following \cite{BMM}, we define the group of $W$ as $G_W=\langle \varphi_1,\varphi_2\rangle$ of birational transformations of the $(x,y)$-plane generated by the involutions:
\begin{equation}\label{eq:phi}
\varphi_1(x,y)=\left(\frac{C_1(y)}{xA_1(y)},y\right),
\quad
\varphi_2(x,y)=\left(x,\frac{C_2(x)}{yA_2(x)}\right).
\end{equation}

Let us consider a family of curves $\mathcal C_t$, parameterized by $t$:

\begin{equation}\label{eq:biqrandomt}
\mathcal C_t\ :\ Q_t(x,y)=xy\left(\sum_{i,j=-1}^1 p_{ij}x^iy^j-\frac{1}{t}\right)=0,
\end{equation}
where $Q_t(x,y)$ is the so-called kernel of the walk and the curves $\mathcal C_t$ are the kernel curves of the walk (see e.g.~\cite{HS}).
Notice that horizontal and vertical involutions on each of the kernel curves are given by \eqref{eq:phi}, and denote by $\Gamma_t$ the group generated by those involutions on $\mathcal{C}_t$, following Malyshev, see e.g.~\cite{Mal, RandomWalks}.

\begin{remark}
Groups $G_W$ and $\Gamma_t$ are dihedral groups, since they are generated by a pair of involutions.
Notice that a dihedral group generated by involutions $\iota_1$, $\iota_2$ is finite if and only if $\iota_1\circ\iota_2$ is of finite order $n$, and then the order of the group equals $2n$. 

\end{remark}

We will use the following uniform bound from \cite{HS}.

\begin{theorem}[\cite{HS}]\label{th:uniformbound} 
If a group $G_W$ is finite, then its order is not greater than $12$.
\end{theorem}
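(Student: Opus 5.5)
The idea is to pass from $G_W$ to the groups $\Gamma_t$ on the kernel curves and to exploit the genus-one structure of $\mathcal C_t$. The key observation is that $\varphi_1$ and $\varphi_2$ preserve the kernel $\chi_W(x,y)$. Indeed, $\varphi_1$ exchanges the two roots in $x$ of $x^2A_1(y)+xB_1(y)+C_1(y)-x/t$, and similarly for $\varphi_2$. Hence each $\mathcal C_t$ is invariant under $G_W$, and restriction gives a surjection $G_W\to\Gamma_t$. Since the union of the curves $\mathcal C_t$ is Zariski dense in the plane, $\varphi_1\circ\varphi_2$ has order $n$ in $G_W$ if and only if $(\varphi_1\circ\varphi_2)^n$ restricts to the identity on generic $\mathcal C_t$. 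It follows that $|G_W|=\operatorname{lcm}_t|\Gamma_t|$, and in fact $|G_W|=|\Gamma_t|$ for generic $t$. So it suffices to bound the order of $\Gamma_t$ for generic $t$.

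Outside degenerate cases, the generic $\mathcal C_t$ is a biquadratic curve of genus one. There $\varphi_1$ and $\varphi_2$ act as involutions of the form $P\mapsto -P+a_i$, so their composition is a translation by a point $s(t)=a_2-a_1$ of the elliptic curve $E_t$. Thus $\Gamma_t$ is finite of order $2n$ exactly when $s(t)$ is an $n$-torsion point of $E_t$. The argument would then run in three steps:
\begin{enumerate}
\item Show that if $G_W$ is finite, then $s(t)$ is torsion of a fixed order $n$ for all $t$. This yields a section of exact order $n$ of the elliptic surface $t\mapsto E_t$ over a rational base, defined over $\mathbb{Q}(p_{ij})(t)$ or over $\mathbb{R}(t)$.
\item Bound $n$ using the structure of the rational elliptic surface. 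The pencil of biquadratics $xy\chi_W=xy/t$ is a pencil of $(2,2)$ curves on $\mathbb P^1\times\mathbb P^1$, so it defines a rational elliptic surface after blowing up the eight base points. Torsion sections of rational elliptic surfaces have order at most $6$, coming from the Oguiso--Shioda classification; on a rational base one also has Mazur-type constraints. This gives $n\le 6$ and hence $|G_W|\le 12$.
\item Handle degenerate walks separately, namely those where $\mathcal C_t$ is reducible or of genus zero. In those cases $\varphi_1\circ\varphi_2$ acts on a rational curve as a Möbius map. Here I would use positivity of the $p_{ij}$ together with the small-step structure to show that finiteness forces small order, or else the group is infinite.
\end{enumerate}

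The main obstacle is step 2: justifying that the torsion order of the section is at most $6$. The standard argument needs the surface to be genuinely rational elliptic, which requires the eight base points to be in the correct configuration, including with multiplicities when some $p_{ij}=0$. Without that configuration one has to fall back on analysing the singular fibres through the Shioda--Tate formula. My first attempt would be to compute the singular fibres at $t=0$ and $t=\infty$ explicitly, and to use the fact that torsion sections inject into the component groups of the reducible fibres. I expect step 3 to be routine but tedious.
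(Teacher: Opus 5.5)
Your plan is correct and is essentially the standard argument behind the bound the paper imports from \cite{HS} (the paper itself gives no proof): the reduction to generic $t$ is Theorem~\ref{th:finite-group}, and on the rational elliptic surface defined by the kernel pencil, $\varphi_1\circ\varphi_2$ is fibrewise translation by a section, which, if torsion, has order at most $6$ by the Oguiso--Shioda classification, whence $|G_W|\le 12$. The obstacle you name is not a real one, since resolving the eight (possibly infinitely near) base points of the pencil spanned by $xy\chi_W$ and $xy$ always gives a relatively minimal rational elliptic surface with a section once the generic $\mathcal C_t$ has genus one, whatever the vanishing pattern of the $p_{ij}$; and step 3 needs no positivity, because for a non-degenerate walk the generic $\mathcal C_t$ has genus zero only when three weights around a corner vanish (say $p_{-1,-1}=p_{-1,0}=p_{0,-1}=0$), in which case it has a node at $(0,0)$ with branches $x=\lambda_k y+O(y^2)$, $p_{1,-1}\lambda_k^2+(p_{00}-1/t)\lambda_k+p_{-1,1}=0$, both involutions swap these branches, and the multiplier $r=\lambda_1/\lambda_2$ of $\varphi_1\circ\varphi_2$ at the node satisfies $r+r^{-1}+2=(p_{00}-1/t)^2/(p_{1,-1}p_{-1,1})$, so it is not a fixed root of unity and $G_W$ is infinite.
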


For the proof, see Remark 5.1 in \cite{HS}.

We denote the matrix of $\mathcal{C}_t$ as follows:
\begin{equation}\label{eq:P}
	P_t=\begin{pmatrix}
		p_{11} & p_{10} &  p_{1,-1}\\
		p_{01} &  p_{00}-1/t &  p_{0,-1}\\
		p_{-1,1} &  p_{-1,0} &  p_{-1,-1}
	\end{pmatrix}.
\end{equation}

The following theorem follows directly from Theorem 3.3 of \cite{DRrw}.

\begin{theorem}\label{th:cayley}
The restriction of $(\varphi_1\circ\varphi_2)^n$ to $\mathcal{C}_t$ is the identity on that curve if and only if:
\begin{itemize}
	\item [(i)] $n=2$ and $\det P_t=0$;
	\item [(ii)] $n=2k+1$, $k\ge1$, and
	$$
	\det\begin{pmatrix}
		C_2 & C_3 & \dots & C_{k+1}\\
		C_3 & C_4 & \dots & C_{k+2}\\
		&&\dots& \\
		C_{k+1} & C_{k+2} &\dots & C_{2k}
	\end{pmatrix}
	=0;
	$$
	\item [(iii)] $n=2k$, $k\ge2$, and
	$$
	\det\begin{pmatrix}
		C_3 & C_4 & \dots & C_{k+1}\\
		C_4 & C_5 & \dots & C_{k+2}\\
		&&\dots& \\
		C_{k+1} & C_{k+2} &\dots & C_{2k-1}
	\end{pmatrix}
	=0.
	$$
\end{itemize}
Here, the entries $C_k$ of the matrices are the coefficients in the following Taylor expansion:
\begin{equation}\label{eq:taylor}
	\sqrt{4x^3- Dx+E}
	=
	C_0+C_1(x-X_0)+C_2(x-X_0)^2+C_3(x-X_0)^3+\dots,
\end{equation}
where
$$
X_0
=\frac{(p_{00}-1/t)^2-4p_{01}p_{0,-1}-4p_{10}p_{-1,0}+8p_{-1,1}p_{1,-1}+8p_{11}p_{-1,-1}}{12},
$$
and
\begin{align*}
	D=\, &
	\frac{1}{12} \left((p_{00}-1/t)^2-4 p_{-1,-1} p_{22}-4 p_{-1,0} p_{10}-4 p_{-1,1} p_{1,-1}+2 p_{0,-1} p_{01}\right)^2
	\\&
	-( p_{0,-1} (p_{00}-1/t)-2 p_{-1,-1} p_{10}-2 p_{-1,0} p_{1,-1})
	( (p_{00}-1/t) p_{01}-2 p_{-1,0} p_{22}-2 p_{-1,1} p_{10})
	\\&
	+\left(p_{0,-1}^2-4 p_{-1,-1} p_{1,-1}\right) \left(p_{01}^2-4 p_{-1,1} p_{22}\right),\\
	E=\, &
	-\frac{1}{6} \left(p_{0,-1}^2-4 p_{-1,-1} p_{1,-1}\right)
	\left(p_{01}^2-4 p_{-1,1} p_{22}\right)\times
	\\&
	\quad\times\left((p_{00}-1/t)^2-4 p_{-1,-1} p_{22}-4 p_{-1,0} p_{10}-4 p_{-1,1} p_{1,-1}+2 p_{0,-1} p_{01}\right)
	\\&
	+\frac{1}{4} \left(p_{0,-1}^2-4 p_{-1,-1} p_{1,-1}\right)
	((p_{00}-1/t) p_{01}-2 p_{-1,0} p_{22}-2 p_{-1,1} p_{10})^2
	\\&
	+\frac{1}{216} \left((p_{00}-1/t)^2-4 p_{-1,-1} p_{22}-4 p_{-1,0} p_{10}-4 p_{-1,1} p_{1,-1}+2 p_{0,-1} p_{01}\right)^3
	\\&
	-\frac{1}{12} (p_{0,-1} (p_{00}-1/t)-2 p_{-1,-1} p_{10}-2 p_{-1,0} p_{1,-1})
	((p_{00}-1/t) p_{01}-2 p_{-1,0} p_{22}-2 p_{-1,1} p_{10})
	\times
	\\&\ \ \ \times
	\left((p_{00}-1/t)^2-4 p_{-1,-1} p_{22}-4 p_{-1,0} p_{10}-4 p_{-1,1} p_{1,-1}+2 p_{0,-1} p_{01}\right)
	\\&
	+\frac{1}{4} \left(p_{01}^2-4 p_{-1,1} p_{22}\right)
	(p_{0,-1} (p_{00}-1/t)-2p_{-1,-1} p_{10}-2 p_{-1,0} p_{1,-1})^2.
\end{align*}
\end{theorem}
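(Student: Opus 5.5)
Since the statement is said to follow directly from Theorem 3.3 of \cite{DRrw}, the plan is to reduce the present setting to that one and then translate the hypotheses. Theorem 3.3 of \cite{DRrw} concerns a biquadratic curve $\sum a_{ij}x^iy^j=0$ ($0\le i,j\le 2$) with coefficient matrix $A$, together with its horizontal and vertical involutions. It characterizes when the $n$-fold composition is the identity. The criterion is the classical Cayley condition on the Weierstrass form $w^2=4x^3-Dx+E$ of the curve, taken at the point $X_0$ that corresponds to a base point of the Abel map.

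First, multiplying $\chi_W(x,y)-1/t$ by $xy$ gives the biquadratic $Q_t(x,y)=\sum_{i,j=-1}^1 p_{ij}x^{i+1}y^{j+1}-xy/t$. Its coefficient matrix, after reindexing $i\mapsto 1-i$ and $j\mapsto 1-j$, is exactly $P_t$ from \eqref{eq:P}.

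Next, we check that the involutions $\varphi_1,\varphi_2$ of \eqref{eq:phi}, restricted to $\mathcal C_t$, are the standard horizontal and vertical switches. For fixed $y$, the equation $Q_t=0$ is quadratic in $x$, namely $x^2yA_1(y)+\dots+yC_1(y)=0$. By Vieta, the product of its roots is $C_1(y)/A_1(y)$, so the second root is $C_1(y)/(xA_1(y))$. The same argument in $y$ gives $\varphi_2$. Hence $\Gamma_t$ is precisely the group studied in \cite{DRrw}, and $\varphi_1\circ\varphi_2$ acts on the (generically elliptic) curve $\mathcal C_t$ as a translation by a fixed point.

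Then we substitute the entries of $P_t$ into the invariants of \cite{DRrw}: the base point $X_0$ and the coefficients $D,E$ of the Weierstrass cubic. This yields the formulas displayed in the statement. Here $p_{22}$ should be read as the corner entry of the matrix (the coefficient $p_{-1,1}$ or $p_{11}$, according to the indexing convention of \cite{DRrw}); I would make this dictionary explicit. The Cayley conditions (ii) and (iii), stated as Hankel determinants in the Taylor coefficients $C_k$ of $\sqrt{4x^3-Dx+E}$ at $X_0$, then carry over verbatim.

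Case (i) needs separate attention. For $n=2$, the translation has order two exactly when the curve degenerates in the sense of \cite{DRrw}, i.e.\ when $\det P_t=0$, so the biquadratic factorizes compatibly. I would quote that part of Theorem 3.3 directly.

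The main obstacle is bookkeeping. I have to verify that the normalizations of \cite{DRrw}, namely index order, the sign of the $1/t$ entry, and the placement of $p_{22}$, match $P_t$ so that $X_0$, $D$ and $E$ come out exactly as written. To settle this, I would recompute $X_0$ from the trace-type invariant and compare it on a symmetric test case, for example the simple walk with four unit steps, where the group order is known to be $4$.
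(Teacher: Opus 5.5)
Your proposal is correct and takes the same route as the paper. The paper gives no argument beyond saying the statement follows directly from Theorem 3.3 of \cite{DRrw}, once $\mathcal C_t$ is recognized as the biquadratic with coefficient matrix $P_t$ and $\varphi_1,\varphi_2$ as its horizontal and vertical switches, which is exactly what your Vieta check establishes.

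Two bookkeeping remarks. First, $p_{22}$ must be read as $p_{11}$, not $p_{-1,1}$: within each factor of $D$ and $E$ all products share one index sum (e.g.\ $(p_{00}-1/t)p_{01}$, $p_{-1,0}p_{22}$, $p_{-1,1}p_{10}$ all have sum $(0,1)$). Second, in case (i), $\det P_t=0$ does not in general make $\mathcal C_t$ degenerate. It means $Q_t=f_1(x)g_1(y)+f_2(x)g_2(y)$, so $\varphi_1$ and $\varphi_2$ act on separate coordinates and therefore commute.
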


Similar conditions appeared before \cite{Cayley1853} in a different context, see also \cite{DR2011knjiga} and references therein. See also \cite{DuistermaatBOOK} and references therein.

\begin{theorem}\label{th:finite-group}
The group $G_W$ is of finite order $2n$ if and only if there are infinitely many values of $t$ for which $\Gamma_t$ is of the order $2n$.
Moreover, then all groups $\Gamma_t$ are finite and the order of each of them  divides $2n$.
\end{theorem}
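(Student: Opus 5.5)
The plan is to compare the birational map $\psi=\varphi_1\circ\varphi_2$ of the plane with its restrictions $\psi|_{\mathcal C_t}$. These restrictions generate, together with the involutions, the groups $\Gamma_t$. The key observation is that the pencil $\{\mathcal C_t\}$ fills the plane: through a generic point $(x,y)$ passes exactly one curve, namely the one with $1/t=\chi_W(x,y)$. Each curve is invariant under $\varphi_1$ and $\varphi_2$, because $\chi_W$ is invariant under both involutions (they swap the two roots in $x$, resp. in $y$, of $Q_t$). Hence $\psi^m=\mathrm{id}$ as a birational map if and only if $\psi^m|_{\mathcal C_t}=\mathrm{id}$ for all $t$, or equivalently for a Zariski-dense set of $t$.

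\emph{Step 1 (``only if'' and the divisibility claim).} Suppose $G_W$ has order $2n$, so $\psi^n=\mathrm{id}$ and $n$ is minimal. Then $\psi^n|_{\mathcal C_t}=\mathrm{id}$ for every $t$ with $\mathcal C_t$ not contained in the indeterminacy locus, so every $\Gamma_t$ is finite of order $2m_t$ with $m_t\mid n$. This gives the ``moreover'' part. We still need the order to be exactly $2n$ for infinitely many $t$. If this failed, all but finitely many $t$ would have $m_t$ equal to a proper divisor $d$ of $n$, since there are finitely many divisors. For at least one such $d$, the set of these $t$ is infinite. Then $\psi^d$ is the identity on infinitely many curves of the pencil, and these curves are Zariski dense in the plane. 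So $\psi^d=\mathrm{id}$, contradicting minimality.

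\emph{Step 2 (``if'').} Suppose $\Gamma_t$ has order $2n$ for infinitely many $t$. Then $\psi^n$ is the identity on the union of infinitely many distinct curves $\mathcal C_t$. To make this precise, consider the rational functions $x\circ\psi^n-x$ and $y\circ\psi^n-y$. Each vanishes on infinitely many distinct irreducible components, so each is zero, and $\psi^n=\mathrm{id}$. Thus $G_W$ is finite, of order $2n'$ with $n'\mid n$. Applying Step 1 to $n'$, every $m_t$ divides $n'$. Since $m_t=n$ for some $t$, we get $n\mid n'$, hence $n'=n$.

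The main obstacle is the technical handling of degenerate fibres. For finitely many $t$, the curve $\mathcal C_t$ may be reducible or lie in the indeterminacy locus of $\psi$, and we need $\Gamma_t$ to be well defined for the remaining values. I would handle this by restricting to $t$ outside a finite exceptional set, which does not affect ``infinitely many'' or density. Alternatively, Theorem~\ref{th:cayley} gives a more algebraic route: its conditions are polynomial (or rational) equations in $t$, so they hold for infinitely many $t$ iff they hold identically. This shows directly that the set of $t$ with $\psi^n|_{\mathcal C_t}=\mathrm{id}$ is either finite or cofinite.
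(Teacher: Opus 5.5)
Your proof is correct, and its skeleton matches the paper's. Both arguments prove the two directions, use a pigeonhole over the proper divisors of $n$ to contradict minimality, and reduce everything to one fact: if $\psi^d$ is the identity on $\mathcal C_t$ for infinitely many $t$, then $\psi^d=\mathrm{id}$.

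The difference is in how that fact is justified. The paper invokes Theorem~\ref{th:cayley}: the finite-order condition on $\mathcal C_t$ is a rational function of $t$, so infinitely many zeros force it to vanish identically. It then tacitly uses that the curves $\mathcal C_t$ sweep out the plane to conclude $\psi^n=\mathrm{id}$. You argue directly with the pencil. Since $\chi_W$ is invariant under $\varphi_1,\varphi_2$, the functions $x\circ\psi^d-x$ and $y\circ\psi^d-y$ vanish on infinitely many distinct irreducible curves, hence are zero. The point to check is that different members of the pencil share no components except possibly the lines $x=0$, $y=0$; this holds because $Q_t-Q_{t'}$ is a multiple of $xy$.

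Your route is more elementary and more general. It needs neither the Cayley-type criterion nor the fact that $\mathcal C_t$ is generically an elliptic curve, and it makes explicit the fibre-to-plane step the paper leaves implicit. You are also more careful in the ``if'' direction: you obtain $n'\mid n$ and then $n\mid n'$, where the paper simply asserts that $\psi$ has order $n$.

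The paper's route, in exchange, describes the set of $t$ with $\psi^n|_{\mathcal C_t}=\mathrm{id}$ as the zero set of an explicit rational function of $t$. That is exactly how the conditions $K_n(t)=0$ become the coefficient equations on the $k_i$ in the classification sections that follow.
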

\begin{proof}
First, suppose that $|\Gamma_t|=2n$ for infinitely many values of $t$. 
Then the restriction of $\varphi_1\circ\varphi_2$ to $\mathcal{C}_t$ is of order $n$ for infinitely many $t$, so the corresponding condition from Theorem \ref{th:cayley}, which is polynomial in $t$, must be identically equal to zero, i.e.~it is satisfied for each $t$.
Thus $\varphi_1\circ\varphi_2$ is of order $n$, so $|G_W|=2n$.
We can also conclude that $\Gamma_t$ is finite and of order which divides $2n$, for each $t$.
Notice that it means that there must be only finitely many values of $t$ for which $|\Gamma_t|\neq2n$.

Now, suppose that $|G_W|=2n$.
That immediately implies that all groups $\Gamma_t$ are also finite and that order of each of them divides $2n$. 
If infinitely many of those groups would have order smaller than $2n$, the first part of the proof would imply $|G_W|<2n$.
Thus, almost all groups have order $2n$ and the remaining finitely many of them have orders that divide that number.
\end{proof}

We denote by $K_n(t)=0$ the condition that the group associated with the kernel curve $Q_t=0$ has order $2n$.

\section{Groups of order 4}

In this section, we analyse the case of weighted walks whenever the group $G_W$ is of order $4$. We have:

\begin{theorem}\label{th:order4}
The group $G_W$ is of order $4$ if and only if the following two matrices are singular:
$$
P_{00}
=
\begin{pmatrix}
 p_{11} &  p_{1,-1}\\
 p_{-1,1}  &  p_{-1,-1}
\end{pmatrix},
\quad
P_{\infty}
=
\begin{pmatrix}
 p_{11} & p_{10} &  p_{1,-1}\\
		 p_{01} &  p_{00} &  p_{0,-1}\\
		 p_{-1,1} &  p_{-1,0} &  p_{-1,-1}
\end{pmatrix},
$$
i.e.
$$
\det
\begin{pmatrix}
 p_{11} &  p_{1,-1}\\
 p_{-1,1}  &  p_{-1,-1}
\end{pmatrix}
=
\det
\begin{pmatrix}
 p_{11} & p_{10} &  p_{1,-1}\\
		 p_{01} &  p_{00} &  p_{0,-1}\\
		 p_{-1,1} &  p_{-1,0} &  p_{-1,-1}
\end{pmatrix}
=0.
$$
More explicitly, a weighted walk has the group $G_W$ of order four if and only if the walk is of the form:
$$
P_{\infty}
=\begin{pmatrix}
	\mu\rho & \alpha\rho  &   \nu\rho \\
	p_{01} &  p_{00} &  p_{0,-1}\\
	\mu \sigma &  \alpha \sigma  &  \nu \sigma
\end{pmatrix}
\quad\text{or}\quad
P_{\infty}
=\begin{pmatrix}
	\mu\rho & p_{10} &  \nu\rho\\
	\mu \alpha  &  p_{00} & \nu \alpha \\
	\mu\sigma &  p_{-1,0} &  \nu\sigma
\end{pmatrix},
$$
for some parameters $\alpha$, $\mu$, $\nu$, $\rho$, $\sigma$.
\end{theorem}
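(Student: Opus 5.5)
The plan is to combine Theorem \ref{th:finite-group} with case (i) of Theorem \ref{th:cayley}. The group $G_W$ has order $4$ exactly when $\varphi_1\circ\varphi_2$ has order $2$. By Theorem \ref{th:finite-group}, this happens if and only if $\Gamma_t$ has order $4$ for infinitely many $t$. By Theorem \ref{th:cayley}(i), this holds if and only if $\det P_t=0$ for infinitely many $t$. One point to check is that order dividing $4$ but equal to $2$ cannot occur generically, since $\varphi_1\ne\varphi_2$ on a nondegenerate kernel curve. Granting this, the problem reduces to deciding when the polynomial $t\mapsto\det P_t$ vanishes identically.

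To carry this out, set $s=1/t$ and expand $\det P_t$ along the middle entry $p_{00}-s$. This gives
$$
\det P_t=\det P_\infty - s\,\det P_{00},
$$
because $s$ enters only through the $(2,2)$ entry and its cofactor is exactly $\det P_{00}$. A polynomial of degree at most one in $s$ with infinitely many zeros is identically zero. Hence both coefficients must vanish, so $\det P_{00}=\det P_\infty=0$. Conversely, if both determinants vanish, then $\det P_t\equiv0$ for every $t$, every $\Gamma_t$ has order dividing $4$, and $G_W$ has order $4$. This gives the first (determinantal) form of the statement.

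For the explicit parametrisation, use nonnegativity and case analysis. The condition $\det P_{00}=0$ says that the corner matrix has rank $\le1$, so its rows are proportional: $(p_{11},p_{1,-1})=\rho(\mu,\nu)$ and $(p_{-1,1},p_{-1,-1})=\sigma(\mu,\nu)$. Now perform row operations on $P_\infty$: subtract the appropriate multiple of the first or third row so that the corners of one outer row become zero. Then $\det P_\infty$ factors as a product involving $(\sigma p_{10}-\rho p_{-1,0})$ and a $2\times2$ determinant coming from the middle column and middle row.

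Concretely, I expect
$$
\det P_\infty = -(\sigma p_{10}-\rho p_{-1,0})(\nu p_{01}-\mu p_{0,-1})
$$
after simplification. I would verify this by direct expansion with the corners substituted. Vanishing of the first factor means $(p_{10},p_{-1,0})=\alpha(\rho,\sigma)$, which is the first displayed form. Vanishing of the second factor means $(p_{01},p_{0,-1})=\alpha(\mu,\nu)$, which is the second form.

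The main obstacle is the bookkeeping in the degenerate cases. These are $\mu=\nu=0$, $\rho=\sigma=0$, or vanishing rows, where the proportionality parametrisation must still be shown to absorb every solution; the forms allow zero parameters, so this should go through. A second obstacle is confirming that in these degenerate situations the group does not collapse to order $2$ or to a degenerate kernel. I would handle that by noting that $\varphi_1\ne\varphi_2$ as birational maps unless the walk is degenerate, a case excluded implicitly by the setting.
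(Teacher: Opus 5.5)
Your proposal is correct and matches the paper's proof essentially step for step. Both reduce, via Theorems~\ref{th:finite-group} and~\ref{th:cayley}(i), to $\det P_t\equiv0$, split this as $\det P_t=\det P_\infty-\frac1t\det P_{00}$, parametrise the rank-one corner matrix, and use the same factorisation $\det P_\infty=-(\rho p_{-1,0}-\sigma p_{10})(\mu p_{0,-1}-\nu p_{01})$, of which your sign arrangement is an equivalent form. Your extra remarks address points the paper passes over silently: order $2$ is excluded because $\varphi_1\neq\varphi_2$, and degenerate cases such as $\rho=\sigma=0$ are absorbed by re-choosing the parameters.
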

\begin{proof}
According to Theorems \ref{th:finite-group} and \ref{th:cayley} (case (i)), that case takes place if and only if $\det P_t=0$ for all $t$. This is equivalent to:
$$
-\frac1t\det
\begin{pmatrix}
 p_{11} &  p_{1,-1}\\
 p_{-1,1}  &  p_{-1,-1}
\end{pmatrix}
+
\det
\begin{pmatrix}
 p_{11} & p_{10} &  p_{1,-1}\\
		 p_{01} &  p_{00} &  p_{0,-1}\\
		 p_{-1,1} &  p_{-1,0} &  p_{-1,-1}
\end{pmatrix}
=0
\quad\text{for each }t.
$$

The condition $\det P_{00}=0$ implies that there are parameters $\rho$, $\sigma$, $\mu$, $\nu$ such that

$$
\begin{pmatrix}
	p_{11}&p_{1,-1}\\
	p_{-1,1}&p_{-1,-1}
\end{pmatrix}
=
\begin{pmatrix}
\mu\rho   &   \nu\rho \\
	\mu \sigma  &  \nu \sigma
\end{pmatrix}.
$$
Thus:
$$
\det P_{\infty}
=
-(\rho p_{-1,0}-\sigma p_{10})
(\mu p_{0,-1}-\nu p_{01})=0,
$$
which implies the statement.
\end{proof}

\section{Groups of order 6}

We classify here the weighted walks in quadrant with groups $G_W$ of order $6$. Thus, we describe the probability matrices $P_{\infty}$ for which the associated group of the weighted walk has
order \(6\) for every value of \(t\).

\begin{theorem}\label{th:order6} The weighted walks in quadrant have the groups $G_W$ of order $6$ if and only if ,
up to the symmetries, the probability matrices belong to one of the
	following three families:
	\[
	\begin{pmatrix}
		0 & p_{10} & 0\\
		0 & p_{00} & p_{0,-1}\\
		p_{-1,1} & 0 & 0
	\end{pmatrix},
	\qquad
	p_{10}p_{0,-1}p_{-1,1}\neq0,
	\tag{I}
	\]
	\[
	\begin{pmatrix}
		0 & p_{10} & p_{1,-1}\\
		0 & p_{00} & p_{0,-1}\\
		p_{-1,1} &
		\dfrac{2p_{1,-1}p_{-1,1}}{p_{10}} &
		\dfrac{p_{1,-1}^{\,2}p_{-1,1}}{p_{10}^{\,2}}
	\end{pmatrix},
	\qquad
	p_{10}p_{1,-1}p_{-1,1}\neq0,
	\tag{II}
	\]
	or
	\[
	\begin{pmatrix}
		0 &
		\dfrac{p_{1,-1}p_{-1,1}}{p_{-1,0}} &
		p_{1,-1}\\
		\dfrac{p_{1,-1}p_{-1,1}}{p_{0,-1}} &
		p_{00} &
		p_{0,-1}\\
		p_{-1,1} &
		p_{-1,0} &
		0
	\end{pmatrix},
	\qquad
	p_{1,-1}p_{-1,1}p_{0,-1}p_{-1,0}\neq0.
	\tag{III}
	\]
\end{theorem}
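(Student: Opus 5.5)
By Theorem~\ref{th:finite-group}, $|G_W|=6$ holds if and only if $\Gamma_t$ has order $6$ for infinitely many, hence generically all, values of $t$. By Theorem~\ref{th:cayley}(ii) with $k=1$, the condition $K_3(t)=0$ reads $C_2(t)=0$. From the Taylor expansion \eqref{eq:taylor}, with $f(x)=4x^3-Dx+E$, we have $C_2=\frac{f''(X_0)}{2\cdot 2\sqrt{f(X_0)}}-\frac{f'(X_0)^2}{8f(X_0)^{3/2}}$. So, away from $f(X_0)=0$, the condition is equivalent to
$$
2f(X_0)f''(X_0)-f'(X_0)^2=0 .
$$
Substituting $X_0$, $D$, $E$ gives a polynomial in $s:=p_{00}-1/t$ with coefficients polynomial in the $p_{ij}$. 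Since $1/t$ ranges over infinitely many values, $G_W$ has order $6$ if and only if every coefficient of this polynomial in $s$ vanishes, and the order is not $2$ (the case of order $2$ cannot occur unless the walk is degenerate). This gives a finite system of polynomial equations in the nine weights. I also have to exclude order $4$ separately via Theorem~\ref{th:order4}, so I will impose that $\det P_{00}$ and $\det P_\infty$ do not both vanish.

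To solve the system I would organise by the leading coefficients in $s$. The top coefficients should involve only the "corner" data, namely $p_{0,\pm1}^2-4p_{\pm1,\mp1}\ldots$-type discriminants and $\det P_{00}$. These force strong vanishing: I expect they imply $p_{11}=0$ up to the symmetries of the square (the dihedral group of order 8 acting on the matrix, together with the rescalings $x\mapsto \lambda x$, $y\mapsto\mu y$, which preserve the group structure). From $p_{11}=0$, the next coefficients should force either $p_{01}=0$ or a relation, and so on. I would then split into cases according to how many of the corner weights $p_{1,-1}$, $p_{-1,-1}$ vanish.

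\begin{itemize}
\item Both vanish. This should lead to the three-step family (I), the tandem-type walk.
\item Only $p_{-1,-1}\neq 0$. This should lead to (II), where the perfect-square relation $p_{-1,0}^2=4p_{-1,1}p_{-1,-1}$ appears, encoded in the displayed entries.
\item $p_{-1,-1}=0$ but both anti-diagonal corners are nonzero. This should lead to (III), with the conditions $p_{10}p_{-1,0}=p_{01}p_{0,-1}=p_{1,-1}p_{-1,1}$.
\end{itemize}

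In each case I will check the nondegeneracy conditions, which are needed so that the curve is genuinely elliptic and the order is exactly $6$ rather than $2$.

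For the converse, it suffices to verify directly that for each family $(\varphi_1\circ\varphi_2)^3=\mathrm{id}$ as a birational map, or equivalently that $C_2\equiv 0$ in $t$, and that $\varphi_1\circ\varphi_2\neq\mathrm{id}$. This is a routine symbolic computation, for instance composing the explicit involutions \eqref{eq:phi}.

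The main obstacle is the elimination step: the coefficient system in $s$ is large, and keeping the case split complete modulo symmetries requires care. I would first try to use the symmetries to normalise to $p_{11}=0$ and $p_{-1,1}\neq0$ before expanding, which should make the leading equations factor.
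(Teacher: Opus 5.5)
Your reduction is the same as the paper's. By Theorems \ref{th:finite-group} and \ref{th:cayley}, order $6$ is equivalent to $2ff''-f'^2$ at $X_0$ vanishing identically, i.e.\ to the paper's system $k_0=\dots=k_4=0$, with $P_{00}$ or $P_\infty$ non-degenerate. The gap is that the elimination, which is the whole content of the theorem, is only anticipated, and the case structure you anticipate would lose family (II).

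The top coefficient is not of discriminant type. It is the product of the corners, $k_0=-p_{11}p_{1,-1}p_{-1,1}p_{-1,-1}$. After normalising $p_{11}=0$, the next one is $k_1=-p_{01}p_{10}p_{1,-1}p_{-1,1}p_{-1,-1}$. So the branching must allow an edge weight ($p_{01}$, or $p_{10}$ after transposition) to vanish while the other three corners stay nonzero. That is exactly where (II) lives: it has exactly one vanishing corner, so in any normalisation with $p_{11}=0$ it has $p_{1,-1}p_{-1,1}p_{-1,-1}\neq0$. Hence (II) cannot occur in your second case $p_{1,-1}=0\neq p_{-1,-1}$, whose only solutions are the reflected copy of (I) with steps $(1,0),(0,1),(-1,-1)$. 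Moreover, the configuration $p_{1,-1}p_{-1,-1}\neq0$ does not appear among your three cases at all. Carried out as written, your split would find (I) twice and miss (II).

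The missing branch is $p_{11}=p_{01}=0$. There non-degeneracy forces $p_{-1,1}\neq0$, and $k_2=p_{1,-1}p_{-1,1}^{2}(p_{1,-1}^{2}p_{-1,1}-p_{10}^{2}p_{-1,-1})$.
If $p_{1,-1}=0$, then $k_3$ and $k_4$ successively force $p_{-1,-1}=p_{-1,0}=0$, which is (I).
If $p_{1,-1}\neq0$, then $p_{-1,-1}=p_{1,-1}^{2}p_{-1,1}/p_{10}^{2}$. Now $k_3$, together with $k_4=-p_{1,-1}^{2}p_{-1,1}^{2}(p_{10}p_{-1,0}-2p_{1,-1}p_{-1,1})^{2}$ in the subcase $p_{0,-1}=0$, forces $p_{10}p_{-1,0}=2p_{1,-1}p_{-1,1}$, which is (II).

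The other branches must then be checked explicitly. The branch $p_{11}=p_{1,-1}=0$ gives nothing new, via $k_2$ and $k_3$. The branch $p_{11}=p_{-1,-1}=0$ gives only (III), and this step genuinely uses positivity: $k_3$ carries the factor $p_{0,-1}^{2}p_{10}^{2}+p_{1,-1}^{3}p_{-1,1}>0$ in front of $(p_{10}p_{-1,0}-p_{1,-1}p_{-1,1})^{2}$. Without these explicit factorisations, your outline does not establish that (I)--(III) exhaust the solutions.
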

\begin{proof}
According to Theorems \ref{th:finite-group} and \ref{th:cayley}, the group $G_W$ is of order $2n=6$ if and only if $C_2(t)=0$ for all $t$.

Calculation gives that:
$$
C_2(t)=\frac2t\cdot\frac{
	k_0+k_1 t+k_2 t^2+k_3 t^3+k_4 t^4
}
{(\det P_{00}
	-
	t\det P_{\infty})^3
}
,
$$
where $k_0$, \dots, $k_4$ are polynomial in $p_{ij}$.
Hence, the necessary and sufficient condition for groups $G_W$ to be of order $6$ is:
\[
	k_0=k_1=k_2=k_3=k_4=0.
	\]
	
	Since
	\[
	k_0=-p_{11}p_{1,-1}p_{-1,1}p_{-1,-1},
	\]
	at least one corner coefficient must vanish. By symmetry, we may suppose
	that
	\[
	p_{11}=0.
	\]
With that assumption, we have
	\[
	k_1=-p_{01}p_{10}p_{1,-1}p_{-1,1}p_{-1,-1}.
	\]
	Thus, after applying symmetry if necessary, it remains to consider the
	following cases:
	\[
	p_{01}=0,\qquad p_{1,-1}=0,\qquad\text{or}\qquad p_{-1,-1}=0.
	\]
	
	\medskip
	\noindent\textbf{Case 1: \(p_{11}=p_{01}=0\).}
	
	Here
	\[
	k_2
	=
	p_{1,-1}p_{-1,1}^{\,2}
	\bigl(p_{1,-1}^{\,2}p_{-1,1}-p_{10}^{\,2}p_{-1,-1}\bigr).
	\]
	Moreover,
	\[
	\det P_{00}=-p_{1,-1}p_{-1,1},
	\]
	and
	\[
	\det P_{\infty}
	=
	p_{-1,1}
	\det
	\begin{pmatrix}
		p_{10} & p_{1,-1}\\
		p_{00} & p_{0,-1}
	\end{pmatrix}.
	\]
	Therefore \(p_{-1,1}\neq0\). We obtain two subcases.
	
	\smallskip
	\noindent\emph{Case 1A: \(p_{1,-1}=0\).}
	
	Since \(\det P_{00}=0\), non-degeneracy requires
	\[
	\det P_{\infty}=p_{-1,1}p_{10}p_{0,-1}\neq0.
	\]
	Furthermore,
	\[
	k_3=-p_{0,-1}p_{10}^{\,3}p_{-1,1}^{\,2}p_{-1,-1},
	\]
	so \(p_{-1,-1}=0\), and then
	\[
	k_4=-p_{0,-1}^{\,2}p_{10}^{\,3}p_{-1,0}p_{-1,1}^{\,2},
	\]
	so \(p_{-1,0}=0\). This gives family \emph{(I)}.
	
	\smallskip
	\noindent\emph{Case 1B: \(p_{1,-1}\neq0\).}
	
	Then
	\[
	p_{1,-1}^{\,2}p_{-1,1}=p_{10}^{\,2}p_{-1,-1}.
	\]
	In particular, that implies \(p_{10}p_{-1,-1}\neq0\), and hence
	\[
	p_{-1,-1}
	=
	\frac{p_{1,-1}^{\,2}p_{-1,1}}{p_{10}^{\,2}}.
	\]
	Substitution into \(k_3\) gives
	\[
	k_3=
	p_{0,-1}p_{10}p_{1,-1}p_{-1,1}^{\,2}
	\bigl(2p_{1,-1}p_{-1,1}-p_{10}p_{-1,0}\bigr),
	\]
thus \(p_{0,-1}=0\) or $2p_{1,-1}p_{-1,1}=p_{10}p_{-1,0}$.
	If \(p_{0,-1}=0\), then the expression for \(k_4\):
$$
k_4
=
-p_{1,-1}^2 p_{-1,1}^2 (p_{10} p_{-1,0}-2 p_{1,-1} p_{-1,1})^2,
$$
implies again $2p_{1,-1}p_{-1,1}=p_{10}p_{-1,0}$, thus that relation must hold, which yields family \emph{(II)}.
	
	\medskip
	\noindent\textbf{Case 2: \(p_{11}=p_{1,-1}=0\).}
We can assume here $p_{10}p_{01}p_{0,-1}\neq0$, otherwise the discussion can be reduced to Case 1.
	
	In this case,
	\[
	k_2=-p_{01}p_{0,-1}p_{10}^{\,2}p_{-1,1}p_{-1,-1}.
	\]

The possible cases are \(p_{-1,1}=0\) and \(p_{-1,-1}=0\), which are symmetric to each other, so we can consider only \(p_{-1,1}=0\).
Note that we can also assume $p_{-1,0}\neq0$ because of symmetry with Case 1 and $p_{-1,-1}\neq0$ because of the non-degeneracy of $P_{\infty}$.
Then:
$$
k_3=-p_{01}^2 p_{0,-1} p_{10}^2 p_{-1,0} p_{-1,-1}\neq0,
$$
hence there are no further solutions in this case.
	
	\medskip
	\noindent\textbf{Case 3:
		\(p_{11}=p_{-1,-1}=0.\)}
We can also assume
		\(p_{10}p_{01}p_{1,-1}p_{-1,1}\neq0\), otherwise the consideration can be reduced to the previous cases.
	
	Here
	\[
	k_2
	=
	-p_{1,-1}p_{-1,1}
	\bigl(
	p_{01}p_{0,-1}p_{10}p_{-1,0}
	-
	p_{1,-1}^{\,2}p_{-1,1}^{\,2}
	\bigr).
	\]
	Therefore,
	\begin{equation}\label{eq:order6a}
	p_{01}p_{0,-1}p_{10}p_{-1,0}
	=
	p_{1,-1}^{\,2}p_{-1,1}^{\,2}.
	\end{equation}
Note that this condition implies \(p_{0,-1}p_{-1,0}\neq0\), so we have:
$$
p_{01}= \frac{p_{1,-1}^2 p_{-1,1}^2}{p_{0,-1} p_{10} p_{-1,0}},
$$
and we calculate:
$$
k_3=-\frac{p_{1,-1} p_{-1,1}^2 \left(p_{0,-1}^2 p_{10}^2+p_{1,-1}^3 p_{-1,1}\right) 
(p_{10} p_{-1,0}-p_{1,-1} p_{-1,1})^2}{p_{0,-1} p_{10}^2 p_{-1,0}}.
$$
Then $k_3=0$ and the fact that $p_{ij}$s are probabilities implies 
\begin{equation}\label{eq:order6b}
p_{10} p_{-1,0}-p_{1,-1} p_{-1,1}=0.
\end{equation}
	Combining \eqref{eq:order6a} and \eqref{eq:order6b} gives
	\[
	p_{10}p_{-1,0}
	=
	p_{1,-1}p_{-1,1}
	=
	p_{01}p_{0,-1}.
	\]
	Consequently,
	\[
	p_{10}=\frac{p_{1,-1}p_{-1,1}}{p_{-1,0}},
	\qquad
	p_{01}=\frac{p_{1,-1}p_{-1,1}}{p_{0,-1}},
	\]
	and a direct substitution gives \(k_4=0\). This is family \emph{(III)}.
	
	The listed cases exhaust all possibilities, completing the proof.
\end{proof}

\section{Groups of order 8}\label{sec:order8}

We now classify the probability matrices for which the associated group has
order \(8\) for every \(t\).

\begin{theorem}\label{th:order8} The weighted walks in quadrant have groups $G_W$ of order $8$ if and only if, up to the symmetries of the model, 
	\[
	p_{11}=p_{01}=p_{-1,-1}=p_{0,-1}=0,
	\]
	\[
	p_{10}p_{-1,0}=p_{1,-1}p_{-1,1},
	\]
	and
	\[
	p_{10}p_{1,-1}p_{-1,1}\neq0.
	\]
	Equivalently, the weighted walks in quadrant have groups $G_W$ of order $8$ if and only if
	\[
	P_{\infty}
	=
	\begin{pmatrix}
		0 & p_{10} & p_{1,-1}\\
		0 & p_{00} & 0\\
		p_{-1,1} & \dfrac{p_{1,-1}p_{-1,1}}{p_{10}} & 0
	\end{pmatrix},
	\qquad
	p_{10}p_{1,-1}p_{-1,1}\neq0.
	\]
\end{theorem}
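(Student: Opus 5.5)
We follow the scheme used for Theorems \ref{th:order4} and \ref{th:order6}. By Theorems \ref{th:finite-group} and \ref{th:cayley}(iii) with $k=2$, $G_W$ has order $8$ if and only if $C_3(t)=0$ identically in $t$. We must also exclude the proper divisors: if the group had order $2$ or $4$ for all $t$, then $\varphi_1\circ\varphi_2$ would have order dividing $2$. Hence we additionally require that the conditions of Theorem \ref{th:order4} fail, i.e.\ that $\det P_t\not\equiv 0$. The first step is to compute $C_3(t)$ from the Taylor expansion \eqref{eq:taylor}. Clearing the denominator, which we expect to be a power of $\det P_{00}-t\det P_\infty$ times a power of $t$ as in the order-$6$ case, gives a polynomial $\sum_j m_j t^j$. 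The coefficients $m_j$ are polynomials in the $p_{ij}$, and the task becomes solving $m_j=0$ for all $j$, subject to $p_{ij}\ge 0$ and non-degeneracy.

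As in Section 4, we expect the lowest coefficient to factor as a product of corner weights, possibly $m_0=c\,p_{11}p_{1,-1}p_{-1,1}p_{-1,-1}$ or a square of it. By the symmetries of the model (the $D_4$ action on the $3\times3$ array preserving the walk structure) we may then take $p_{11}=0$. We then proceed through the subsequent coefficients, each time factoring and branching. The expected branches mirror the three cases of Theorem \ref{th:order6}: $p_{01}=0$, $p_{1,-1}=0$, or $p_{-1,-1}=0$. In each branch we use positivity to discard factors that are sums of nonnegative terms, as was done with $k_3$ in Case 3 of Section 4. Whenever a sub-branch forces $\det P_{00}=\det P_\infty=0$, it gives order $\le 4$ and is discarded. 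Whenever a sub-branch reduces to an earlier branch, we dismiss it by symmetry.

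We anticipate that the surviving branch is the one with $p_{11}=p_{01}=0$, $p_{-1,1}\neq0$. There a coefficient should force $p_{-1,-1}=0$. Next, a factor like $p_{0,-1}(\dots)$ should force $p_{0,-1}=0$, since otherwise the remaining constraints lead to a contradiction with positivity. Finally the top coefficient should reduce to a multiple of $(p_{10}p_{-1,0}-p_{1,-1}p_{-1,1})^2$. This yields the stated family. The condition $p_{10}p_{1,-1}p_{-1,1}\ne0$ comes from non-degeneracy: if any of these vanishes, then either $\det P_\infty=\det P_{00}=0$, giving order $4$, or the walk degenerates. For the converse, we substitute the family into $C_3(t)$ and check that it vanishes identically. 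We also verify $\det P_t\not\equiv0$: here $\det P_{00}=-p_{1,-1}p_{-1,1}\neq0$, so the order is exactly $8$ rather than $4$, and order $2$ is trivially excluded.

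The main obstacle is the size of $C_3(t)$ and the branching bookkeeping. The coefficients $m_j$ are large polynomials, so we need them to factor cleanly after each specialization. The hardest branch will likely be the analogue of Case 3, $p_{11}=p_{-1,-1}=0$ with all other weights nonzero, where no single monomial factor appears. There we would first try to solve the lowest nontrivial coefficient for one weight, for instance $p_{01}$, substitute, and factor the next coefficient. We then hope to extract a positive factor times a square, forcing an equality of the type $p_{10}p_{-1,0}=p_{1,-1}p_{-1,1}$. After that, the remaining coefficient should be nonvanishing on the positive orthant, which kills the branch.
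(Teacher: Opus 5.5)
Your proposal follows essentially the same route as the paper. You reduce to $C_3(t)\equiv0$ plus non-degeneracy of $P_{00}$ or $P_\infty$, clear the denominator (which is $t(\det P_{00}-t\det P_\infty)^5$), and branch on the numerator coefficients $k_0,\dots,k_6$ starting from the corner factor $k_0=p_{11}p_{1,-1}p_{-1,1}p_{-1,-1}(p_{11}p_{-1,-1}+p_{1,-1}p_{-1,1})$; the surviving branch is exactly the one you predict.

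Some of your guesses are off in detail, but none of this breaks the plan:
\begin{itemize}
\item The final relation comes from $k_4$ factoring as $(p_{10}p_{-1,0}-p_{1,-1}p_{-1,1})(p_{10}p_{-1,0}+p_{1,-1}p_{-1,1})$, with the second factor positive, not from a square.
\item The sub-branches $p_{10}=0$ and $p_{1,-1}=0$ arising from $k_2$ are not excluded by non-degeneracy alone. They need $k_4$, respectively $k_4,k_5,k_6$.
\item The branch $p_{11}=p_{-1,-1}=0$ is the easiest rather than the hardest, since $k_2$ is then a nonvanishing monomial.
\end{itemize}
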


\begin{proof}
According to Theorems \ref{th:finite-group} and \ref{th:cayley}, the group $G_W$ is of order $2n=6$ if and only if $C_3(t)=0$ for all $t$.

	Thus, the group $G_W$ has the order \(8\) if and only if
	\[C_3(t)=
	\frac{
		k_0+k_1t+k_2t^2+k_3t^3+k_4t^4+k_5t^5+k_6t^6
	}
	{t(\det P_{00}-t\det P_{\infty})^5}
	=0,
	\]
    for all $t$
.	Thus, this identity holds for every \(t\) precisely when the numerator
	vanishes identically and the denominator is not identically zero. Hence we
	require
	\[
	k_0=k_1=\cdots=k_6=0,
	\]
	together with the condition that at least one of \(P_{00}\) and
	\(P_{\infty}\) be non-degenerate. The latter condition excludes the case
	where the group has order \(4\) for every \(t\).
	
	We have
	\[
	k_0
	=
	p_{11}p_{1,-1}p_{-1,1}p_{-1,-1}
	\bigl(p_{11}p_{-1,-1}+p_{1,-1}p_{-1,1}\bigr).
	\]
	Since all coefficients are non-negative, \(k_0=0\) implies that at least
	one corner coefficient vanishes. By symmetry, we may assume that
	\[
	p_{11}=0.
	\]
	Then
	\[
	k_1=p_{01}p_{10}p_{1,-1}^{\,2}p_{-1,1}^{\,2}p_{-1,-1}.
	\]
	Using symmetry once more, it remains to consider the three cases
	\[
	p_{01}=0,\qquad p_{1,-1}=0,\qquad\text{or}\qquad p_{-1,-1}=0.
	\]
	
	\medskip
	\noindent\textbf{Case 1: \(p_{11}=p_{01}=0\).}
	
	In this case,
	\[
	\det P_{00}=-p_{1,-1}p_{-1,1}
\qquad\text{and}\qquad
	\det P_{\infty}
	=
	p_{-1,1}
	\det
	\begin{pmatrix}
		p_{10} & p_{1,-1}\\
		p_{00} & p_{0,-1}
	\end{pmatrix}.
	\]
	Therefore, non-degeneracy implies
	\[
	p_{-1,1}\neq0.
	\]
	Moreover,
	\[
	k_2=p_{10}^{\,2}p_{1,-1}^{\,2}p_{-1,1}^{\,3}p_{-1,-1},
	\]
so we can conclude that one of \(p_{10}\), \(p_{1,-1}\), and \(p_{-1,-1}\) must
	vanish.
	
	If \(p_{10}=0\), then
	\[
	k_4=-p_{1,-1}^{\,5}p_{-1,1}^{\,5},
	\]
	which forces \(p_{1,-1}=0\). But then both \(\det P_{00}\) and
	\(\det P_{\infty}\) vanish, a contradiction.
	
	If \(p_{1,-1}=0\), non-degeneracy gives
	\[
	p_{10}p_{0,-1}\neq0.
	\]
	Then
	\[
	k_4=p_{0,-1}^{\,2}p_{10}^{\,4}p_{-1,1}^{\,3}p_{-1,-1},
	\]
	so \(p_{-1,-1}=0\). Next,
	\[
	k_5=p_{0,-1}^{\,3}p_{10}^{\,4}p_{-1,0}p_{-1,1}^{\,3},
	\]
	forces \(p_{-1,0}=0\), but then
	\[
	k_6=p_{0,-1}^{\,4}p_{10}^{\,4}p_{-1,1}^{\,4}\neq0.
	\]
	Hence this subcase yields no solutions.
	
	It remains to consider
	\[
	p_{-1,-1}=0,
	\qquad
	p_{10}p_{1,-1}p_{-1,1}\neq0.
	\]
	Since
	\[
	k_3=
	p_{0,-1}p_{10}^{\,2}p_{1,-1}^{\,2}p_{-1,0}p_{-1,1}^{\,3},
	\]
	we have either \(p_{0,-1}=0\) or \(p_{-1,0}=0\).
	
	If \(p_{-1,0}=0\) and \(p_{0,-1}\neq0\), then
	\[
	k_4=
	p_{1,-1}^{\,2}p_{-1,1}^{\,4}
	\bigl(p_{0,-1}^{\,2}p_{10}^{\,2}-p_{1,-1}^{\,3}p_{-1,1}\bigr).
	\]
	Substituting the resulting relation into \(k_5\) gives
	\[
	k_5=-3p_{1,-1}p_{-1,1}^{\,4}p_{0,-1}^{\,3}p_{10}^{\,3}\neq0.
	\]
	Thus there are no solutions in this subcase.
	
	Therefore,
	\[
	p_{0,-1}=0.
	\]
	Now
	\[
	k_4=
	p_{1,-1}^{\,3}p_{-1,1}^{\,3}
	\bigl(p_{10}p_{-1,0}-p_{1,-1}p_{-1,1}\bigr)
	\bigl(p_{10}p_{-1,0}+p_{1,-1}p_{-1,1}\bigr).
	\]
	Since the coefficients are probabilities and
	\[
	p_{10}p_{-1,0}+p_{1,-1}p_{-1,1}>0,
	\]
	we obtain
	\[
	p_{10}p_{-1,0}=p_{1,-1}p_{-1,1}.
	\]
	The displayed factor divides both \(k_5\) and \(k_6\); hence
	\[
	k_5=k_6=0.
	\]
	This gives the family stated in the theorem.
	
	\medskip
	\noindent\textbf{Case 2: \(p_{11}=p_{1,-1}=0\).} 
	By symmetry with Case 1, we can also assume \(p_{10}p_{01}p_{0,-1}\neq0\).
	Here
	\[
	k_3=
	p_{01}p_{0,-1}p_{10}^{\,3}p_{-1,1}p_{-1,-1}
	\bigl(p_{01}p_{-1,-1}+p_{0,-1}p_{-1,1}\bigr).
	\]
	Non-negativity shows that the final factor can vanish only if
	\[
	p_{-1,1}=p_{-1,-1}=0,
	\]
	but this would imply \(\det P_{\infty}=0\). Hence either
	\(p_{-1,1}=0\) or \(p_{-1,-1}=0\), and symmetry allows us to consider only one of those possibilities.
	
	Under assumption \(p_{-1,1}=0\), non-degeneracy requires \(p_{-1,-1}\neq0\), and we have
	\[
	k_4=
	p_{01}^{\,3}p_{0,-1}p_{10}^{\,3}p_{-1,0}p_{-1,-1}^{\,2}\neq0.
	\]
	Thus Case \(2\) yields no solutions.
	
	\medskip
	\noindent\textbf{Case 3:
		\(p_{11}=p_{-1,-1}=0\).} 
	By symmetry with Cases 1 and 2, we can assume 
	\[p_{01}p_{10}p_{0,-1}p_{-1,0}p_{1,-1}p_{-1,1}\neq0.
	\]
	But then
	\[
	k_2=
	p_{01}p_{0,-1}p_{10}p_{1,-1}^{\,2}p_{-1,0}p_{-1,1}^{\,2}\neq0,
	\]
	so this case yields no solutions.
	
	The three cases exhaust all possibilities. Therefore the family displayed in
	the statement of the theorem represents the complete classification, up to symmetry.
\end{proof}

\begin{remark}
For completeness of argument in a more general case beyond probability, let us consider the case when the factor: 
\begin{equation}\label{eq:cond+}
p_{10} p_{-1,0}+p_{1,-1} p_{-1,1},
\end{equation}
which appears in Case 1 of the previous proof, is not strictly positive.
If it vanishes, we have: 
$$
k_6
=
-8p_{1,-1}^{\,3}p_{-1,1}^{\,3}
	p_{10}^{\,2}p_{-1,0}^{\,2}
	\bigl(p_{10}p_{-1,0}-p_{1,-1}p_{-1,1}\bigr),
$$
so we must have $p_{-1,0}=0$, but then the condition \eqref{eq:cond+} implies $p_{1,-1} p_{-1,1}=0$, which, according to the discussion in Case 1, will not yield any solutions.
\end{remark}

\section{Groups of order 10}

We now classify the weighted walks in quadrant with the group $G_W$ of the  
order \(10\).

\begin{theorem}\label{th:order10}
	Up to the symmetries, the weighted walks in quadrant with the group $G_W$ of the  
order \(10\), are precisely those with the probability matrices of the form:
	\[
	P_{\infty}
	=
	\frac{1}{Z}
	\begin{pmatrix}
		0 & S^3R & S^2\\
		S^3R^3 & P & 2SR\\
		S^2R^4 & 2SR^3 & R^2
	\end{pmatrix},
	\]
	where
	\[
	S>0,\qquad R>0,\qquad P\geq0,
	\]
	and
	\[
	Z
	=
	S^3R+S^2+S^3R^3+P+2SR+S^2R^4+2SR^3+R^2.
	\]
\end{theorem}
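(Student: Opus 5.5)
We follow the same scheme as in Theorems \ref{th:order6} and \ref{th:order8}. By Theorems \ref{th:finite-group} and \ref{th:cayley}(ii) with $n=5$, $k=2$, the group $G_W$ has order $10$ if and only if
$$
\det\begin{pmatrix} C_2 & C_3\\ C_3 & C_4\end{pmatrix}=C_2C_4-C_3^2=0
$$
holds identically in $t$. In addition, $G_W$ must not have order $2$, i.e.\ $P_{00}$ and $P_\infty$ must not both be singular. We first compute $C_2,C_3,C_4$ from the Taylor expansion \eqref{eq:taylor}. We then clear the common denominator, which is a power of $t$ times a power of $(\det P_{00}-t\det P_\infty)$. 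This gives a numerator $\sum_j k_j t^j$ whose coefficients $k_j$ are polynomials in the $p_{ij}$, and the condition becomes $k_j=0$ for all $j$.

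The case analysis starts from the lowest coefficient. We expect $k_0$ to be, as before, a product of the four corner weights times a factor that is positive for nonnegative data. Hence some corner vanishes, and by the symmetries (the reflections $x\leftrightarrow 1/x$, $y\leftrightarrow 1/y$ and the swap $x\leftrightarrow y$) we may take $p_{11}=0$. Next, $k_1$ should factor as $p_{01}p_{10}$ times corner terms, giving again the three branches $p_{01}=0$, $p_{1,-1}=0$, $p_{-1,-1}=0$ up to symmetry. Each branch is treated by climbing through $k_2,k_3,\dots$. At each step we either kill a monomial factor or extract a binomial relation and substitute it back. Throughout we use:
\begin{itemize}
\item nonnegativity, to discard factors that are sums of nonnegative monomials;
\item the non-degeneracy condition, to forbid both determinants vanishing;
\item the order $4,6,8$ classifications (Theorems \ref{th:order4}, \ref{th:order6}, \ref{th:order8}), to exclude models whose group order is a proper divisor, as in Theorem \ref{th:finite-group}.
\end{itemize}
Since $10$ is divisible only by $1,2,5,10$, only the order-$2$ degeneracy has to be excluded, which simplifies the bookkeeping.

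The surviving branch should be the one with $p_{11}=0$ and all other entries nonzero. There the vanishing of successive $k_j$ gives binomial relations such as
$$
p_{0,-1}^2=4p_{1,-1}p_{-1,-1},\qquad p_{-1,0}^2=4p_{-1,1}p_{-1,-1}\cdot(\ldots),
$$
so that all weights become monomials in two free ratios. To get the stated form we normalise by scaling: set $p_{1,-1}=S^2/Z$ and $p_{-1,-1}=R^2/Z$. The relations then force
\begin{itemize}
\item $p_{0,-1}=2SR/Z$ and $p_{-1,0}=2SR^3/Z$,
\item $p_{10}=S^3R/Z$, $p_{01}=S^3R^3/Z$ and $p_{-1,1}=S^2R^4/Z$,
\end{itemize}
with $p_{00}=P/Z$ free, since $p_{00}$ enters only through the shift $1/t$. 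This parametrisation reproduces the matrix in the statement. The converse is a direct check that all $k_j$ vanish on this family and that $\det P_{00}=-S^4R^4/Z^2\ne0$.

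The main obstacle is the size of the polynomials $k_j$: the $2\times2$ Hankel determinant has much higher degree than $C_2$ or $C_3$. We must also show that the other branches (some of $p_{01},p_{1,-1},p_{-1,-1}$ vanishing) either collapse to order-$2$ degeneracy or give a nonvanishing monomial $k_j$. For this we would first try to factor each $k_j$ completely after every substitution and use positivity aggressively. We would also find the monomial parametrisation early, by guessing it from the highest and lowest $k_j$, so that the remaining verification reduces to substitution.
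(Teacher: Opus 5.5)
Your framework is the paper's: require $C_2C_4-C_3^2\equiv0$ in $t$, clear the denominator $(\det P_{00}-t\det P_\infty)^{10}$, and use $k_0=(p_{11}p_{1,-1}p_{-1,1}p_{-1,-1})^3$ to get $p_{11}=0$ by symmetry. The next step, however, is wrong in a way that matters. With $p_{11}=0$ one has $k_1=k_2=0$ identically, and the first informative coefficient is
\[
k_3=p_{01}p_{10}p_{1,-1}^{3}p_{-1,1}^{3}p_{-1,-1}\bigl(p_{01}^{2}p_{10}^{2}p_{-1,-1}^{2}-p_{1,-1}^{3}p_{-1,1}^{3}\bigr).
\]
Its last factor is a \emph{difference}, so it can vanish for positive weights. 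This gives a fourth branch, the balance relation $p_{01}^{2}p_{10}^{2}p_{-1,-1}^{2}=p_{1,-1}^{3}p_{-1,1}^{3}$. The entire family of the theorem lies in this branch, with $p_{01}p_{10}p_{1,-1}p_{-1,1}p_{-1,-1}\neq0$. Your claim that the first coefficient yields only the three branches $p_{01}=0$, $p_{1,-1}=0$, $p_{-1,-1}=0$ contradicts your own surviving branch, where all these weights are nonzero.

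A smaller point concerns the degeneracy to exclude, $\det P_{00}=\det P_\infty=0$. This is the order-$4$ family of Theorem~\ref{th:order4}, where the denominator vanishes identically. It is not an order-$2$ case: order $2$ never occurs. Orders $6$ and $8$ do not divide $10$, so those classifications play no role.

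The main gap is that the surviving branch is read off from the target rather than derived. The relations $p_{0,-1}^2=4p_{1,-1}p_{-1,-1}$ and $p_{-1,0}^2=4p_{-1,1}p_{-1,-1}$ do hold on the final family. But you give no reason why the $k_j$ force them, and ``set $p_{1,-1}=S^2/Z$, $p_{-1,-1}=R^2/Z$; the relations then force the rest'' is circular.

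Nor is the elimination a chain of binomial relations that positivity can sort out. In the paper, one normalises $p_{-1,-1}=1$ and parametrises the balance relation by $p_{01}=S^3R$, $p_{10}=S^3/R$, $p_{1,-1}=S^2T$, $p_{-1,1}=S^2/T$. Then $k_4=0$ determines $p_{-1,0}$ as a non-monomial rational function. The resultants of $k_5$ with $k_6,\dots,k_{10}$ in $p_{0,-1}$ share the factors $R^2T-1$ and $4-9R^2T+4R^4T^2$. The latter has the positive roots $R^2T=(9\pm\sqrt{17})/8$, so it must be excluded using the remaining equations, not by sign considerations. Only on $T=R^{-2}$ do $k_5=k_6=0$ reduce to $(p_{0,-1}R-2S)^2$ times cofactors with resultant $676S^8R^8\neq0$. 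This gives $p_{0,-1}=2S/R$ and $p_{-1,0}=2SR$.

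Similarly, the three monomial branches are closed only by exhibiting specific nonvanishing coefficients, e.g.\ $k_8=p_{1,-1}^{10}p_{-1,1}^{10}$ and $k_{12}=-p_{0,-1}^{8}p_{10}^{8}p_{-1,1}^{8}$ in the relevant subcases. Your proposal only announces these steps. Your converse check is fine: $\det P_{00}=-S^4R^4/Z^2\neq0$, and $p_{00}$ is free because it enters only through $p_{00}-1/t$.
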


\begin{proof}
Using Theorems \ref{th:cayley} and \ref{th:finite-group}, we get that $G_W$ has order $2n=10$ if and only if $C_2(t)C_4(t)-C_3^2(t)=0$, for all $t$. This can be equivalently written as:
	\[
	\frac{k_0+k_1t+k_2t^2+\cdots+k_{12}t^{12}}
	{\bigl(\det P_{00}-t\det P_{\infty}\bigr)^{10}}
	=0,
	\]
for all $t$, with $k_0$, \dots, $k_{12}$ being polynomial in $p_{ij}$.

	Therefore, the group $G_W$ has the order \(10\) when
	\[
	k_0=k_1=\cdots=k_{12}=0
	\]
	and at least one of \(P_{00}\) and \(P_{\infty}\) is non-degenerate.
	
	Since
	\[
	k_0=p_{11}^{\,3}p_{1,-1}^{\,3}p_{-1,1}^{\,3}p_{-1,-1}^{\,3},
	\]
	at least one corner coefficient must vanish. By symmetry, we may assume
	\[
	p_{11}=0.
	\]
	Then \(k_1=k_2=0\), while
	\[
	k_3=
	p_{01}p_{10}p_{1,-1}^{\,3}p_{-1,1}^{\,3}p_{-1,-1}
	\bigl(
	p_{01}^{\,2}p_{10}^{\,2}p_{-1,-1}^{\,2}
	-
	p_{1,-1}^{\,3}p_{-1,1}^{\,3}
	\bigr).
	\]
	Thus, up to symmetry, there are four possibilities:
	\[
	p_{01}=0,\qquad
	p_{1,-1}=0,\qquad
	p_{-1,-1}=0,
	\]
	or
	\begin{equation}\label{eq:order10-balance}
		p_{01}^{\,2}p_{10}^{\,2}p_{-1,-1}^{\,2}
		=
		p_{1,-1}^{\,3}p_{-1,1}^{\,3}.
	\end{equation}
	
	\medskip
	\noindent\textbf{Cases with vanishing coefficients.}
	
	We will now show that the first three cases yield no solutions.
	
	Suppose that \(p_{01}=0\). 
	Then non-degeneracy of at least one of the matrices $P_{00}$ and $P_{\infty}$ implies
	\[
	p_{-1,1}\neq0.
	\]
	Moreover,
	\[
	k_4=-p_{10}^{\,2}p_{1,-1}^{\,6}p_{-1,1}^{\,7}p_{-1,-1}.
	\]
	The three possibilities \(p_{10}=0\), \(p_{1,-1}=0\), and
	\(p_{-1,-1}=0\) all lead to no solutions:
	
	\begin{itemize}
		\item If \(p_{10}=0\), then non-degeneracy requires \(p_{1,-1}\neq0\), whereas
		\[
		k_8=p_{1,-1}^{\,10}p_{-1,1}^{\,10}\neq0.
		\]
		
		\item If \(p_{1,-1}=0\), then non-degeneracy requires
		\(p_{0,-1}\neq0\) and we can also assume $p_{10}\neq0$. The vanishing of \(k_9\) and \(k_{11}\) successively
		forces
		\[
		p_{-1,-1}=p_{-1,0}=0,
		\]
		but then
		\[
		k_{12}=-p_{0,-1}^{\,8}p_{10}^{\,8}p_{-1,1}^{\,8}\neq0.
		\]
		
		\item If \(p_{-1,-1}=0\), we can also assume $p_{10}p_{1,-1}\neq0$. Then
		\[
		k_5=-p_{0,-1}p_{10}^{\,2}p_{1,-1}^{\,6}
		p_{-1,0}p_{-1,1}^{\,7}.
		\]
		If \(p_{0,-1}=0\), then \(k_6=0\) forces \(p_{-1,0}=0\), and hence
		\[
		k_8=p_{1,-1}^{\,10}p_{-1,1}^{\,10}\neq0.
		\]
		If \(p_{-1,0}=0\) and \(p_{0,-1}\neq0\), then
		\[
		k_6=-p_{0,-1}^{\,2}p_{10}^{\,2}p_{1,-1}^{\,6}p_{-1,1}^{\,8}\neq0.
		\]
	\end{itemize}
	
	Hence the case \(p_{01}=0\) gives no solutions. Similarly the cases
	\[
	p_{1,-1}=0
	\qquad\text{and}\qquad
	p_{-1,-1}=0
	\]
	give no solutions either. Consequently, any solution must satisfy
	\[
	p_{01}p_{10}p_{1,-1}p_{-1,1}p_{-1,-1}\neq0
	\]
	and the relation \eqref{eq:order10-balance},
	which together consititute:
	
	\medskip
	\noindent\textbf{The remaining case.}
	
	Since the coefficients \(k_3,\ldots,k_{12}\) are homogeneous, we may
	temporarily normalize
	\[
	p_{-1,-1}=1.
	\]
	Equation~\eqref{eq:order10-balance} then becomes
	\[
	p_{01}^{\,2}p_{10}^{\,2}
	=
	p_{1,-1}^{\,3}p_{-1,1}^{\,3}.
	\]
	Introduce positive parameters \(S,R,T\) by
	\[
	S^4=p_{1,-1}p_{-1,1},
	\qquad
	R^2=\frac{p_{01}}{p_{10}},
	\qquad
	T^2=\frac{p_{1,-1}}{p_{-1,1}}.
	\]
	Thus,
	\[
	p_{01}=S^3R,
	\qquad
	p_{10}=\frac{S^3}{R},
	\qquad
	p_{1,-1}=S^2T,
	\qquad
	p_{-1,1}=\frac{S^2}{T}.
	\]
	
	The equation \(k_4=0\) gives
	\[
	p_{-1,0}
	=
	-\frac{3p_{0,-1}RS-2R^4S^2T^2-2S^2}
	{R^2T(3RST-2p_{0,-1})}.
	\]
	Substituting that, the remaining equations \(k_5=\cdots=k_{10}=0\) reduce to polynomial
	equations in \(R,S,T,p_{0,-1}\). 
	Those equations can be analysed by calculating resultants with respect to variable $p_{0,-1}$.
	Namely, all resultants of $k_5$ with the remaining polynomials $k_6$, \dots, $k_{10}$ have factors		
	\[
		R^2T-1
		\qquad\text{and}\qquad
		4-9R^2T+4R^4T^2.
		\]
By analysing the resultants, one can conclude that the second factor and the remaining factors do not give any solutions.
	
On the other hand, the first factor gives:
	\[
	T=\frac1{R^2},
	\]
	and therefore
	\[
	p_{01}=S^3R,
	\qquad
	p_{10}=\frac{S^3}{R},
	\qquad
	p_{1,-1}=\frac{S^2}{R^2},
	\qquad
	p_{-1,1}=S^2R^2.
	\]
	Moreover,
	\[
	p_{-1,0}
	=
	-\frac{RS(3p_{0,-1}R-4S)}
	{3S-2p_{0,-1}R}.
	\]
	Substitution into \(k_5=0\) and \(k_6=0\) gives
	\[
	(p_{0,-1}R-2S)^2
	\bigl(
	5p_{0,-1}^{\,2}R^2
	-14p_{0,-1}RS
	+11S^2
	\bigr)=0
	\]
	and
	\[
		(p_{0,-1}R-2S)^2
		\bigl(
		31p_{0,-1}^{\,4}R^4
		-69p_{0,-1}^{\,3}R^3S
		-67p_{0,-1}^{\,2}R^2S^2
		+279p_{0,-1}RS^3
		-188S^4
		\bigr)=0.
	\]
	The resultant of the two second factors, considered as polynomials in
	\(p_{0,-1}\), is
	\[
	676S^8R^8\neq0.
	\]
	Hence
	\[
	p_{0,-1}=\frac{2S}{R},
	\qquad
	p_{-1,0}=2SR.
	\]
	
	Finally, undoing the normalization \(p_{-1,-1}=1\) by multiplying the
	matrix by \(R^2\), and then normalizing its entries to have total sum one,
	gives
	\[
	\frac{1}{Z}
	\begin{pmatrix}
		0 & S^3R & S^2\\
		S^3R^3 & P & 2SR\\
		S^2R^4 & 2SR^3 & R^2
	\end{pmatrix}.
	\]
	Here \(P\geq0\) is arbitrary, since \(p_{00}\) does not occur in the
	preceding relations. Direct substitution shows that all coefficients
	\(k_0,\ldots,k_{12}\) vanish for this family. This completes the proof.
\end{proof}

\section{Groups of order 12}

We show that there are no weighted walks whose associated group $G_W$ has the order
\(12\).

\begin{theorem}\label{th:order12}
	There are no random walks in quadrant, for which the associated
	group $G_W$  has the order \(12\).
\end{theorem}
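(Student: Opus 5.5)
We follow the scheme of the previous sections. By Theorems \ref{th:cayley} (case (iii) with $k=3$) and \ref{th:finite-group}, the group $G_W$ has order $12$ if and only if
$$
\det\begin{pmatrix} C_3 & C_4\\ C_4 & C_5\end{pmatrix}=C_3(t)C_5(t)-C_4^2(t)=0
$$
for all $t$. We also require that the group does not have a smaller order dividing $12$, namely $2$, $4$ or $6$. Order $2$ is impossible for a nontrivial walk, and the cases of order $4$ and $6$ are excluded by Theorems \ref{th:order4} and \ref{th:order6}.

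\textbf{Step 1: the polynomial system.} As before, we write
$$
C_3C_5-C_4^2=\frac{k_0+k_1t+\dots+k_Nt^N}{t^{m}(\det P_{00}-t\det P_{\infty})^{M}},
$$
with $k_j$ polynomial in the $p_{ij}$, and we impose $k_0=\dots=k_N=0$. We also impose non-degeneracy: $P_{00}$ and $P_{\infty}$ are not both singular, and the walk does not belong to families (I)--(III) of Theorem \ref{th:order6}.

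\textbf{Step 2: the lowest coefficient.} We expect $k_0$ to be, up to sign, a product of a power of $p_{11}p_{1,-1}p_{-1,1}p_{-1,-1}$ and possibly a factor with nonnegative coefficients, exactly as in the orders $6$, $8$ and $10$. If so, positivity forces a corner weight to vanish, and by the symmetries of the model we may take $p_{11}=0$. The next nonzero coefficient should then factor as $p_{01}p_{10}p_{1,-1}p_{-1,1}p_{-1,-1}$ times a balance factor. This splits the analysis into the cases $p_{01}=0$, $p_{1,-1}=0$, $p_{-1,-1}=0$ and the generic balance case.

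\textbf{Step 3: the vanishing-weight cases.} In each case with vanishing weights, we walk up through $k_2,k_3,\dots$. In each subcase we either reach a coefficient that is a nonzero monomial under the standing non-vanishing assumptions, such as $k_N\propto(p_{0,-1}p_{10}p_{-1,1})^{\ast}$ or $(p_{1,-1}p_{-1,1})^{\ast}$, or we reach a configuration that is degenerate or lies in families (I)--(III). Families (I)--(III) have order $6$, and the family of Theorem \ref{th:order8} has order $8$, which does not divide $12$, so $C_3C_5-C_4^2\not\equiv0$ there. A useful shortcut is to substitute directly the families already found, which should appear as spurious factors.

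\textbf{Step 4: the generic balance case (main obstacle).} We normalize $p_{-1,-1}=1$ using homogeneity and parametrize the balance relation by positive monomial parameters, as in the proof of Theorem \ref{th:order10}. We then solve one low coefficient linearly for one weight, say $p_{-1,0}$, and take resultants of the remaining $k_j$ with respect to $p_{0,-1}$. We expect the common factors to be either non-positive on the positive orthant or to lead to the order-$6$ family (III). Each such factor must be checked for positive real solutions, for instance via the sign of coefficients, or via a final resultant that is a nonzero monomial, like the $676S^8R^8$ in Theorem \ref{th:order10}. This verification, including ruling out positive roots of the leftover irreducible factors, is the computationally delicate step. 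Once it is done, no probability matrix satisfies the system, which proves the theorem.
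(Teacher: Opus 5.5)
\textbf{There is a genuine gap: your proposal is a plan, not a proof.} Your outline has the same architecture as the paper's proof: the Cayley condition $C_3C_5-C_4^2\equiv0$, the numerator coefficients $k_j$, positivity of $k_0$ forcing a corner weight to vanish, reduction to $p_{11}=0$ by symmetry, and then a case analysis. But every load-bearing step is stated as an expectation (``we expect'', ``should factor'', ``we either reach''). The theorem is exactly the claim that one explicit polynomial system has no admissible nonnegative solution, so all of its content lies in the actual shape of the $k_j$.

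Where you do predict that shape, the decisive step is wrong. Your guess for $k_0$ is right: the paper has $k_0=-p_{11}^{2}p_{1,-1}^{2}p_{-1,1}^{2}p_{-1,-1}^{2}\bigl(p_{11}^{2}p_{-1,-1}^{2}+p_{11}p_{1,-1}p_{-1,1}p_{-1,-1}+p_{1,-1}^{2}p_{-1,1}^{2}\bigr)$. However, after setting $p_{11}=0$ one gets $k_1=0$ and $k_2=-p_{01}^{2}p_{10}^{2}p_{1,-1}^{4}p_{-1,1}^{4}p_{-1,-1}^{2}$, which is a pure monomial. So there is no balance case. Your Step 4, which you call the main obstacle, is vacuous. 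Its anticipated outcome, family (III), could not occur there in any case, since (III) has $p_{-1,-1}=0$ while you normalized $p_{-1,-1}=1$.

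The real work is what you compress into Step 3, and there you give no argument. It succeeds because each relevant coefficient is a monomial times a sum of nonnegative terms, so no resultants are needed.
For $p_{01}=0$, the paper uses $k_4=-p_{10}^{2}p_{1,-1}^{4}p_{-1,1}^{6}p_{-1,-1}\bigl(p_{10}^{2}p_{-1,-1}+p_{1,-1}^{2}p_{-1,1}\bigr)$. It then uses $k_{11}$ if $p_{10}=0$, the chain $k_8,k_{10},k_{12}$ if $p_{1,-1}=0$, or $k_5,k_6,k_{10}$ if $p_{-1,-1}=0$.
For $p_{1,-1}=0$, $k_6$ carries the factor $p_{01}^{2}p_{-1,-1}^{2}+p_{01}p_{0,-1}p_{-1,1}p_{-1,-1}+p_{0,-1}^{2}p_{-1,1}^{2}$, followed by $k_8$, $k_{10}$ and, in one subcase, $k_{12}$.
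For $p_{-1,-1}=0$ with $p_{01}p_{10}p_{0,-1}p_{-1,0}p_{1,-1}p_{-1,1}\neq0$, $k_4$ is a nonzero monomial times $p_{01}p_{0,-1}p_{10}p_{-1,0}+p_{1,-1}^{2}p_{-1,1}^{2}$.
Each chain ends in a nonzero monomial. You have to compute these coefficients and verify these chains before you have a proof.

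Your caution about families (I)--(III) is well founded. Since $y^2$ is cubic in $x-X_0$, its coefficients at $(x-X_0)^4$ and $(x-X_0)^5$ vanish. This gives $4C_0^2(C_3C_5-C_4^2)=-C_2\bigl(C_2^3+2C_1C_2C_3+4C_0C_3^2\bigr)$, so the order-$6$ families satisfy the unreduced condition identically. The paper's degree-$12$ system evidently has this factor removed: its chain for family (I) ends with $k_{12}\neq0$. Dividing out $C_2$ at the start is cleaner than recognizing those families as spurious solutions inside the case analysis.
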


\begin{proof}
Theorems \ref{th:cayley} and \ref{th:finite-group} give that $G_W$ is of order $2n=12$ if and only if $C_3(t)C_5(t)-C_4^2(t)=0$, for all $t$, while $C_2(t)C_3(t)\neq0$ and, at least one of \(P_{00}\) and \(P_{\infty}\) is non-degenerate.

The condition $C_3(t)C_5(t)-C_4(t)^2=0$ for all $t$ is equivalent to
	\[
	k_0+k_1t+k_2t^2+\cdots+k_{12}t^{12}=0,
	\]
for all $t$, where $k_0$, \dots, $k_{12}$ are polynomial in $p_{ij}$.
	Thus, for the group $G_W$ to be of order $12$ it is necessary to have
	\[
	k_0=k_1=\cdots=k_{12}=0.
	\]
	
	We calculate:
	\[
	k_0
	=
	-p_{11}^{\,2}p_{1,-1}^{\,2}p_{-1,1}^{\,2}p_{-1,-1}^{\,2}
	\left(
	p_{11}^{\,2}p_{-1,-1}^{\,2}
	+p_{11}p_{1,-1}p_{-1,1}p_{-1,-1}
	+p_{1,-1}^{\,2}p_{-1,1}^{\,2}
	\right).
	\]
	Since the coefficients are non-negative, \(k_0=0\) implies that at least one
	corner coefficient vanishes. By symmetry, we may assume
	\[
	p_{11}=0.
	\]
	Then \(k_1=0\), and
	\[
	k_2=-p_{01}^{\,2}p_{10}^{\,2}p_{1,-1}^{\,4}
	p_{-1,1}^{\,4}p_{-1,-1}^{\,2}.
	\]
	Hence, up to symmetry, we need only consider the cases
	\[
	p_{01}=0,
	\qquad
	p_{1,-1}=0,
	\qquad\text{or}\qquad
	p_{-1,-1}=0.
	\]
	
	\medskip
	\noindent\textbf{Case 1: \(p_{11}=p_{01}=0\).}
	
	Non-degeneracy implies
	\[
	p_{-1,1}\neq0.
	\]
	Moreover,
	\[
	k_4=
	-p_{10}^{\,2}p_{1,-1}^{\,4}p_{-1,1}^{\,6}p_{-1,-1}
	\left(
	p_{10}^{\,2}p_{-1,-1}
	+p_{1,-1}^{\,2}p_{-1,1}
	\right).
	\]
	Therefore, one of \(p_{10}\), \(p_{1,-1}\), and \(p_{-1,-1}\) must vanish.
	
	If \(p_{10}=0\), non-degeneracy requires \(p_{1,-1}\neq0\), whereas
	\[
	k_{11}=p_{1,-1}^{\,11}p_{-1,1}^{\,11}\neq0.
	\]
	Thus this subcase is impossible.
	
	If \(p_{1,-1}=0\), then non-degeneracy implies
	\(
	p_{0,-1}\neq0.
	\), and we can also assume $p_{10}\neq0$.
	The equation
	\[
	k_8=-p_{0,-1}^{\,4}p_{10}^{\,8}p_{-1,1}^{\,6}p_{-1,-1}^{\,2}
	\]
	forces \(p_{-1,-1}=0\). Then
	\[
	k_{10}
	=
	-p_{0,-1}^{\,6}p_{10}^{\,8}p_{-1,0}^{\,2}p_{-1,1}^{\,6}
	\]
	forces \(p_{-1,0}=0\), but
	\[
	k_{12}
	=
	-2p_{0,-1}^{\,8}p_{10}^{\,8}p_{-1,1}^{\,8}\neq0.
	\]
	Hence this subcase is impossible.
	
	It remains to consider
	\(
	p_{-1,-1}=0\), with
	\(p_{10}p_{1,-1}p_{-1,1}\neq0.
	\)
	We have
	\[
	k_5=
	-p_{0,-1}p_{10}^{\,2}p_{1,-1}^{\,6}p_{-1,0}p_{-1,1}^{\,7}.
	\]
	Thus either \(p_{0,-1}=0\) or \(p_{-1,0}=0\).
	
	If \(p_{0,-1}=0\), then
	\[
	k_6=
	-p_{10}^{\,2}p_{1,-1}^{\,7}p_{-1,0}^{\,2}p_{-1,1}^{\,7},
	\]
	so \(p_{-1,0}=0\). But then
	\[
	k_{10}=p_{1,-1}^{\,11}p_{-1,1}^{\,11}\neq0.
	\]
	
	If \(p_{-1,0}=0\) and \(p_{0,-1}\neq0\), then
	\[
	k_6=
	-p_{0,-1}^{\,2}p_{10}^{\,2}p_{1,-1}^{\,6}p_{-1,1}^{\,8}\neq0.
	\]
	Thus Case \(1\) yields no solutions.
	
	\medskip
	\noindent\textbf{Case 2:
		\(p_{11}=p_{1,-1}=0\).} 
By symmetry with Case 1, we can assume \(p_{01}p_{10}\neq0\).
	
	We have
	\[
		k_6=
		-p_{01}^{\,2}p_{0,-1}^{\,2}p_{10}^{\,6}p_{-1,1}^{\,2}
		p_{-1,-1}^{\,2}
		\left(
		p_{01}^{\,2}p_{-1,-1}^{\,2}
		+p_{01}p_{0,-1}p_{-1,1}p_{-1,-1}
		+p_{0,-1}^{\,2}p_{-1,1}^{\,2}
		\right).
	\]
	Hence one of
	\[
	p_{0,-1}=0,
	\qquad
	p_{-1,1}=0,
	\qquad\text{or}\qquad
	p_{-1,-1}=0
	\]
	must hold.
	
	If \(p_{0,-1}=0\), non-degeneracy gives \(p_{-1,-1}\neq0\). Then
	\[
	k_8=
	-p_{01}^{\,4}p_{10}^{\,8}p_{-1,1}^{\,2}p_{-1,-1}^{\,6}
	\]
	forces \(p_{-1,1}=0\), and
	\[
	k_{10}=
	-p_{01}^{\,6}p_{10}^{\,8}p_{-1,0}^{\,2}p_{-1,-1}^{\,6}
	\]
	forces \(p_{-1,0}=0\). But then
	\[
	k_{12}=
	-2p_{01}^{\,8}p_{10}^{\,8}p_{-1,-1}^{\,8}\neq0.
	\]
	
	If \(p_{-1,1}=0\), non-degeneracy gives \(p_{-1,-1}\neq0\) and we can also assume $p_{0,-1}\neq0$. Then
	\[
	k_8=
	-p_{01}^{\,6}p_{0,-1}^{\,2}p_{10}^{\,6}
	p_{-1,0}^{\,2}p_{-1,-1}^{\,4}
	\]
	forces \(p_{-1,0}=0\), but
	\[
	k_{10}=
	-p_{01}^{\,8}p_{0,-1}^{\,2}p_{10}^{\,6}p_{-1,-1}^{\,6}\neq0.
	\]
	
	Finally, if \(p_{-1,-1}=0\), then we can also assume $p_{0,-1}p_{-1,1}\neq0$ and calculate
	\[
	k_8=
	-p_{01}^{\,2}p_{0,-1}^{\,6}p_{10}^{\,6}
	p_{-1,0}^{\,2}p_{-1,1}^{\,4},
	\]
	which forces \(p_{-1,0}=0\), while
	\[
	k_{10}=
	-p_{01}^{\,2}p_{0,-1}^{\,8}p_{10}^{\,6}p_{-1,1}^{\,6}\neq0.
	\]
	Therefore Case \(2\) gives no solutions.
	
	\medskip
	\noindent\textbf{Case 3:
		\(p_{11}=p_{-1,-1}=0\).}
By symmetry with Cases 1 and 2, we can also assume 
\[p_{01}p_{10}p_{1,-1}p_{-1,1}p_{0,-1}p_{-1,0}\neq0,\]
but then
	\[
		k_4=
		-p_{01}p_{0,-1}p_{10}p_{1,-1}^{\,4}p_{-1,0}p_{-1,1}^{\,4}
		\left(
		p_{01}p_{0,-1}p_{10}p_{-1,0}
		+p_{1,-1}^{\,2}p_{-1,1}^{\,2}
		\right)\neq0.
	\]
	Hence, this case gives no solutions.
	
	The three cases exhaust all the possibilities and we conclude that there are no weighted walks
	whose associated group has order \(12\).
\end{proof}

\section{Conclusion}

We summarize our previous considerations.

\begin{theorem}\label{th:conclusion} All weighted walks in quadrant with a finite group $G_W$ are described in Theorems \ref{th:order4}, \ref{th:order6},  \ref{th:order8}, and \ref{th:order10}. There are no weighted walks in quadrant with groups $G_W$ of order higher than $10$.
\end{theorem}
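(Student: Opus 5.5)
The statement is a synthesis of the preceding sections, so the plan is to combine the uniform bound of Theorem \ref{th:uniformbound} with the fact that $G_W$ is dihedral. By the Remark following the definition of $\Gamma_t$, $G_W=\langle\varphi_1,\varphi_2\rangle$ is generated by two involutions. Hence if it is finite, its order is $2n$, where $n$ is the order of $\varphi_1\circ\varphi_2$.

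Two easy degenerate cases come first. Order $1$ would force $\varphi_1=\varphi_2=\mathrm{id}$. This is impossible, since $\varphi_1$ genuinely moves $x$ whenever $A_1C_1\not\equiv0$. Order $2$ would force $\varphi_1=\varphi_2$, which is impossible because one fixes $y$ and changes $x$ while the other does the reverse. Such walks, with $A_1$ or $C_1$ identically zero, are the degenerate ones excluded from the model, so I would state that convention explicitly. Thus $n\ge2$, and Theorem \ref{th:uniformbound} gives $2n\le12$. The possible orders are therefore $4,6,8,10,12$.

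Each remaining order is then handled by the earlier sections:
\begin{itemize}
\item order $4$ by Theorem \ref{th:order4};
\item order $6$ by Theorem \ref{th:order6};
\item order $8$ by Theorem \ref{th:order8};
\item order $10$ by Theorem \ref{th:order10};
\item order $12$ by Theorem \ref{th:order12}, which rules it out.
\end{itemize}
Each of these theorems is an ``if and only if'' characterisation obtained from Theorems \ref{th:cayley} and \ref{th:finite-group}. So every walk with finite $G_W$ appears in exactly one of the listed families, namely the one matching its order. Every listed walk indeed has finite group, and the statement about orders higher than $10$ follows from the absence of order $12$ together with the bound.

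The main point needing care is that each family in Theorems \ref{th:order6}--\ref{th:order10} has group of exactly the stated order, and not a proper divisor of it. For example, the Cayley condition for $n=4$ also holds when the order is $4$. The proofs of those theorems already impose non-degeneracy of $P_{00}$ or $P_{\infty}$, which excludes the order-$4$ families. For order $6$ and order $10$ the only proper divisor available is $2$, which is excluded as above. So the overlap issue is settled by Theorem \ref{th:finite-group}: if the generic $\Gamma_t$ had smaller order, then $G_W$ would have that smaller order, contradicting the non-degeneracy conditions checked in each case.

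I would finally remark that ``up to symmetries'' refers to the symmetries $x\leftrightarrow 1/x$, $y\leftrightarrow1/y$ and $x\leftrightarrow y$. These conjugate $G_W$ and hence preserve its order, so the classification is complete.
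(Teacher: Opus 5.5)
Your proposal is correct and follows essentially the same route as the paper: finiteness, the dihedral structure and the bound of Theorem \ref{th:uniformbound} leave only the orders $4,6,8,10,12$, which are then settled one by one by Theorems \ref{th:order4}, \ref{th:order6}, \ref{th:order8}, \ref{th:order10} and \ref{th:order12}. Your extra remarks on excluding orders $1,2$ and on the order being exact only make explicit what the paper handles through the non-degeneracy assumptions in each proof (one small imprecision: for the order-$4$ families the expression $\det P_{00}-t\det P_{\infty}$ vanishes identically, so the paper's $C_3(t)$ is undefined there rather than zero, which is exactly why non-degeneracy is imposed).
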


\begin{proof} The proof follows from the proofs of Theorems \ref{th:order4}, \ref{th:order6},  \ref{th:order8}, and \ref{th:order10}, as well as the proofs of Theorems \ref{th:uniformbound} and \ref{th:order12}.
\end{proof}

Let us observe that the method of the proof of Theorem \ref{th:order12} can be used to show that there is no weighted walk in quadrant with the group $G_W$ of any given order $2n$, for $n>6$.

The results of Theorems \ref{th:order4}, \ref{th:order6},  \ref{th:order8}, and \ref{th:order10}, as well as the proofs of Theorems \ref{th:uniformbound} and \ref{th:order12}, as summarized in Theorem \ref{th:conclusion} coincide with the results formulated in \cite{KaYa} for orders $4, 6, 8$ and in \cite{EHR} for orders $10$ and higher, and are logically independent from them.

\

\subsection*{Acknowledgments}
We are  grateful  to Andrew Elvey Price, Emmanuel Humbert, and Kilian Raschel for sharing their interesting preprint \cite{EHR} with us and for a fruitful discussion that led to Theorem \ref{th:finite-group}. V.D. also thanks them for hospitality during his visit to Tours. 
This research was partially supported by Simons Foundation grant no.~854861.

\end{document}